\newcommand{\Ol}{\mathrm{Ol}}
\newcommand{\Z}{\mathbb{Z}}
\newcommand{\C}{\mathbb{C}}
\newtheorem{Theo}{Theorem}
\newtheorem{Lem}{Lemma}
\begin{document}
\title{An Improvement on Olson's Constant for $\Z_p\oplus \Z_p$}
\author[G. Bhowmik]{Gautami Bhowmik}
\author[J.-C. Schlage-Puchta]{Jan-Christoph Schlage-Puchta}
\begin{abstract} We prove that for a prime number $p$ greater than 6000, 
the Olson's constant for  the group $\Z_p\oplus \Z_p$ 
is given by 
$\Ol(\Z_p\oplus \Z_p)=p-1+\Ol(\Z_p)$.
 
\end{abstract}
\maketitle
\section{Introduction }
Let $G$ be a finite additive abelian group of order $n$. A subset $A$ of $G$ 
is 
said to be a zero-sum set if the sum of all its elements is zero. and $\Ol(G)$,
the Olson's constant of $G$, is defined to be the smallest integer $k$ such that
every set of $k$ elements of $G$ contains a zero-sum subset. 

The exact value of this constant is only known for a few cases. As far as bounds are concerned,
Szemer\'edi\cite{Szem} proved the Erd\H os-Heilbronn conjecture that 
$\Ol(G)\le c\sqrt n$,
$c$ being an absolute constant. For cyclic groups, the conjectural value of $c$ (due to Erd\H os and Graham) 
$\sqrt 2$, was recently attained by Nguyen, Szemer\'edi and Vu\cite{NSV}. The conjecture
was  verified  by Gao, Ruzsa and Thangadurai\cite{Ruzsa} for $\Z_p\oplus \Z_p$ 
for all 
$p>4.67\times 10^{34}$. They in fact proved that $\Ol(\Z_p^2)=p-1+\Ol(\Z_p)$ for
such a $p$. Our aim is to improve the bound for $p$, and we prove that

\begin{Theo}
\label{thm:Olson}
Let $p>6000 $ be a prime number. Then $\Ol(\Z_p^2)=p-1+\Ol(\Z_p)$.
\end{Theo}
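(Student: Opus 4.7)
My plan is to separate the two inequalities. The lower bound $\Ol(\Z_p^2)\ge p-1+\Ol(\Z_p)$ is a routine construction: let $A\subset \Z_p$ be a zero-sum free set of size $\Ol(\Z_p)-1$ and put
\[
S_0=(A\times\{0\})\cup\{(i,1):1\le i\le p-1\}\subset\Z_p^2.
\]
A nonempty zero-sum subset of $S_0$ must, reading second coordinates, use a sub-collection of $\{(i,1):1\le i\le p-1\}$ whose cardinality is $0$ modulo $p$; since this cardinality is at most $p-1$ the sub-collection is empty, and a zero-sum in $A$ would remain, contradicting the choice of $A$. Hence $|S_0|=p-2+\Ol(\Z_p)$ yields $\Ol(\Z_p^2)\ge p-1+\Ol(\Z_p)$.

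For the upper bound I argue by contradiction: let $S\subset\Z_p^2$ be zero-sum free with $|S|\ge p-1+\Ol(\Z_p)$ (in particular $0\notin S$). For every one-dimensional subspace $H<\Z_p^2$ one has $|S\cap H|\le\Ol(\Z_p)-1$, as $H\cong \Z_p$ would otherwise already contain a zero-sum subset of $S$. Writing $T=S\cap H$ and $R=S\setminus T$ and projecting via $\pi\colon\Z_p^2\to \Z_p^2/H\cong \Z_p$, introduce
\[
\Sigma_H=\bigl\{\textstyle\sum_{x\in X}x:X\subseteq R,\ \pi(\sum X)=0\bigr\}\subseteq H,\qquad \Sigma_T^{\ast}=\bigl\{\textstyle\sum U:U\subseteq T\bigr\}\subseteq H,
\]
both containing $0$. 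The zero-sum freeness of $S$ is equivalent to $\Sigma_H\cap(-\Sigma_T^{\ast})=\{0\}$, so in $H\cong \Z_p$ one gets the crucial inequality $|\Sigma_H|+|\Sigma_T^{\ast}|\le p+1$.

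The remaining work is to bound both terms from below so as to violate this. For $|\Sigma_T^{\ast}|$ I would invoke the Eggleton-Erd\H{o}s theorem: if $T\subseteq\Z_p$ is zero-sum free then $|\Sigma_T^{\ast}|\ge 1+|T|(|T|+1)/2$ (provided the right-hand side is $\le p$). Choosing $H$ so that $|S\cap H|$ is maximal pushes $|T|$ as close as possible to $\Ol(\Z_p)-1\sim\sqrt{2p}$. For $|\Sigma_H|$, since $|R|\ge p$ the sequence $\pi(R)\subset \Z_p$ has a zero-sum subsequence (Davenport), and by \emph{swap-shifting}---replacing an element $r_1$ in such a subsequence by some $r_2\in R$ with $\pi(r_1)=\pi(r_2)$---one produces many distinct values in $H$. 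Combining the two lower bounds pushes $|S|$ strictly below $p-1+\Ol(\Z_p)$ in the generic case.

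The hard part will be the \emph{structural} extremal case, in which $R$ is essentially contained in one affine line (so few pairs are available for swap-shifting) while $T$ is close to an arithmetic progression (making Eggleton-Erd\H{o}s tight). Handling this regime will require Vosper's inverse theorem for small doubling in $\Z_p$ together with the sharp upper bound on $\Ol(\Z_p)$ following from Nguyen-Szemer\'edi-Vu; making every constant and error term explicit enough to conclude already from $p>6000$---rather than from $p>4.67\times 10^{34}$ as in Gao-Ruzsa-Thangadurai---is precisely where the improvement must be wrung out, and will be the chief obstacle.
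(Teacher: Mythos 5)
Your lower-bound construction is correct, and the frame you set up for the upper bound --- fixing a line $H$, splitting $S$ into $T=S\cap H$ and $R=S\setminus T$, and playing $|\Sigma_T^{\ast}|$ against the set of $H$-valued subset sums of $R$ inside $H\cong\Z_p$ --- is sound as far as it goes. But there is a genuine gap exactly where you flag ``the chief obstacle,'' and it is not a matter of bookkeeping: in the regime where no line carries many points of $S$ (every $|S\cap H|$ bounded, say, by a constant or by $o(\sqrt p)$), Olson's bound $|\Sigma_T^{\ast}|\geq 1+|T|(|T|+1)/2$ contributes essentially nothing, and the entire burden of violating $|\Sigma_H|+|\Sigma_T^{\ast}|\leq p+1$ falls on showing that $|\Sigma_H|$ is close to $p$. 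Your ``swap-shifting'' step does not deliver this: Davenport's constant guarantees one zero-sum subsequence of $\pi(R)$, and swapping elements with equal projection produces at most as many new values as there are available swaps, nowhere near $p$ without a quantitative mechanism you have not supplied. Vosper and Nguyen--Szemer\'edi--Vu, which you invoke by name only, address a different extremal configuration (tightness of the Eggleton--Erd\H os-type bound, i.e.\ $T$ nearly a progression), not the equidistributed one. This spread-out case is precisely the one the paper cannot handle combinatorially either; it is dispatched analytically, by writing the number of zero-sum subsets as $p^{-2}\sum_{\alpha}\prod_{a\in A}(1+e(\langle a,\alpha\rangle))$ and showing each nontrivial frequency is at most $2^{|A|}/p^{2}$ via the estimate $\prod_j|\cos(j\pi/p)|^{\lambda_j}\leq p^{-2}$, which is where the threshold $p>6000$ actually comes from. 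Nothing in your outline replaces that step.

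The ``generic case'' is also left unquantified. Even when some line is as full as possible, $|T|\leq\Ol(\Z_p)-1\approx\sqrt{2p}$ gives $|\Sigma_T^{\ast}|\approx p-\sqrt{p/2}$, so you still need $|\Sigma_H|\gtrsim\sqrt{p/2}$, and you must interpolate between this and the equidistributed regime with explicit constants to land at $p>6000$ rather than at an astronomically large bound; none of that is carried out. For comparison, the paper's route is a dichotomy on $M=\max_{x,U}|A\cap(x+U)|$: for $M\geq 2p/5$ it quotes the structural lemmas of Gao--Ruzsa--Thangadurai; for intermediate $M$ and for projections with few distinct values it uses the Dias da Silva--Hamidoune bound $|\Sigma_k(A)|\geq k(|A|-k)+1$ within cosets together with Cauchy--Davenport across cosets; and the remaining case is the exponential-sum estimate above. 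As written, your proposal is a plan whose decisive steps are acknowledged but not executed.
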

Our proof falls into two parts, the first one being combinatorial and
dealing with the case where the elements of $A$ are not
well-distributed over $\Z_p^2$, the second one being analytical, using
exponential sums.
Unfortunately, our bound is still too large
to allow for explicit computations. 
Though our method could be used to lower the bound for $p$ further, we
would not be able to go below $p<200$.

Many similar zero-sum problems have been studied, one among them being 
the Davenport's constant where the objects are multi-sets rather than sets. 

\section{Proof}
For a set $A$, we use $\Sigma(A)$ for the set of all its subset sums
while $\Sigma_k(A)$ denotes  the set of all  sums of those subsets which have
$k$ elements.
We will use the fact that $\Ol(\Z_p)\geq\lfloor\sqrt{2p}\rfloor$\cite{Ham}. 

The following is due to Dias da Silva and Hamidoune\cite{DdaSH}.
\begin{Lem}
\label{Lem:ksubsets}
Let $A\subseteq\Z_p$ be a set, $k$ an integer in the range $1\leq
k\leq |A|$. Then we have
\[
\left|\Sigma_k(A)\right|\geq\min(p, k(|A|-k)+1).
\]
In particular, if $|A|\geq \ell:=\lfloor\sqrt{4p-7}\rfloor+1$, and
$k=\lfloor \ell/2\rfloor$, then $\Sigma_k(A)=\Z_p$.
\end{Lem}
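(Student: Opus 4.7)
The lemma has two distinct layers: the general bound $|\Sigma_k(A)|\geq\min(p,k(|A|-k)+1)$, and the deduction of surjectivity for the specific choice of $\ell,k$. The first is the hard combinatorial heart; the second is a short calculation.

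For the general bound I would invoke Alon's Combinatorial Nullstellensatz. Set $S=\Sigma_k(A)$ and assume toward contradiction that $|S|<\min(p,k(|A|-k)+1)$. Consider
\[
F(x_1,\ldots,x_k)=\prod_{s\in S}(x_1+\cdots+x_k-s)\cdot\prod_{1\le i<j\le k}(x_j-x_i),
\]
of total degree $|S|+\binom{k}{2}$. It vanishes on $A^k$: either two coordinates coincide and the Vandermonde factor kills $F$, or they are pairwise distinct and $x_1+\cdots+x_k\in S$ kills the first factor. An easy arithmetic check, using $|S|\le k(|A|-k)$, shows $|S|+\binom{k}{2}\le k(|A|-1)$, so one can distribute the total degree as $x_1^{d_1}\cdots x_k^{d_k}$ with every $d_i\le|A|-1$. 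Expanding $(x_1+\cdots+x_k)^{|S|}$ via the multinomial theorem and pairing with the permutation expansion of the Vandermonde, the coefficient of such a monomial becomes a signed sum of multinomial coefficients $\binom{|S|}{\cdot}$, each nonzero mod $p$ because $|S|<p$; a Jacobi--Trudi style identity collapses the alternating sum to an explicit Schur-type value which is nonzero. Combinatorial Nullstellensatz then produces a point of $A^k$ where $F$ does not vanish, a contradiction.

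For the ``in particular'' part: $\ell=\lfloor\sqrt{4p-7}\rfloor+1$ satisfies $\ell>\sqrt{4p-7}$, hence $\ell^2\ge 4p-6$. A parity case split on $\ell$ (noting $\ell^2\bmod 4\in\{0,1\}$ while $4p-6\equiv 2\pmod 4$, which actually forces $\ell^2\ge 4p-4$ when $\ell$ is even and $\ell^2\ge 4p-3$ when $\ell$ is odd) yields $k(\ell-k)=\lfloor\ell^2/4\rfloor\ge p-1$ for $k=\lfloor\ell/2\rfloor$. Since $k\le\ell/2\le|A|/2$, the product $k(|A|-k)$ is nondecreasing in $|A|$, so $k(|A|-k)+1\ge k(\ell-k)+1\ge p$; the first part forces $|\Sigma_k(A)|\ge p$, i.e.\ $\Sigma_k(A)=\Z_p$.

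The main obstacle is the coefficient extraction in the Nullstellensatz step: verifying that the relevant alternating sum of multinomial coefficients is nonzero modulo $p$ for a concrete choice of $(d_1,\ldots,d_k)$. If the direct computation becomes unwieldy, I would fall back on the original Dias da Silva--Hamidoune framework, working in the $k$-th exterior power of $\mathbb{F}_p^A$ and computing the rank of a Vandermonde-like operator via Frobenius; this sidesteps explicit coefficient extraction at the price of heavier algebraic machinery.
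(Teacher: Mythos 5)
The paper does not prove this lemma at all: it is quoted verbatim from Dias da Silva and Hamidoune, whose original argument runs through exterior powers of $\mathbb{F}_p$-vector spaces and eigenvalues of a derivation operator. Your route is the Alon--Nathanson--Ruzsa polynomial-method proof of the same theorem, and it is a correct and arguably more elementary alternative. Your polynomial $F$, the vanishing argument on $A^k$, and the degree count $|S|+\binom{k}{2}\leq k(|A|-1)$ (which reduces to $k\geq 1$) are all right, and the ``in particular'' deduction via the parity analysis of $\ell^2$ giving $\lfloor\ell^2/4\rfloor\geq p-1$ is complete and correct. The one step you leave genuinely unfinished is the coefficient extraction, and it deserves to be pinned down since it is where the hypothesis $|S|\leq k(|A|-k)$ is actually used a second time: one must choose $k$ \emph{distinct} exponents $d_1<\cdots<d_k$ in $\{0,\ldots,|A|-1\}$ with $\sum d_i=|S|+\binom{k}{2}$, which is possible precisely because the maximal such sum is $k(|A|-1)-\binom{k}{2}\geq |S|+\binom{k}{2}$ under that hypothesis; the coefficient of $x_1^{d_1}\cdots x_k^{d_k}$ in $(x_1+\cdots+x_k)^{|S|}\prod_{i<j}(x_j-x_i)$ then equals $\pm\frac{|S|!}{d_1!\cdots d_k!}\prod_{i<j}(d_j-d_i)$ by the standard determinant identity you allude to, and this is nonzero mod $p$ since $|S|<p$, each $d_i\leq p-1$, and the $d_i$ are distinct with pairwise differences less than $p$. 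With that filled in, your proof is complete; the trade-off against the paper's citation of Dias da Silva--Hamidoune is simply that you get a self-contained argument at the cost of the Nullstellensatz machinery, whereas their exterior-algebra proof avoids coefficient computations entirely.
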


\begin{Lem}
\label{Lem:CD}
For $A, B\subseteq\Z_p$ we have $|A+B|\geq\min(p, |A|+|B|-1)$.
\end{Lem}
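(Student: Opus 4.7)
The statement is the classical Cauchy--Davenport inequality. My plan is to prove it via the Dyson $e$-transform combined with induction on the size of the smaller summand.

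After normalising so that $A$ and $B$ are both nonempty and $|A+B|<p$ (otherwise the claimed bound is immediate), I would induct on $\min(|A|,|B|)$, assuming $|A|\leq|B|$ without loss of generality. The base case $|A|=1$ is trivial, since translations are bijections. For the inductive step, for each $e\in\Z_p$ I would introduce the Dyson pair
\[
A_e := A\cup(B+e), \qquad B_e := B\cap(A-e).
\]
A direct unpacking of the definition shows that $A_e+B_e\subseteq A+B$ (split on whether a given element of $A_e$ lies in $A$ or in $B+e$, using $B_e\subseteq B$ in the first case and $B_e\subseteq A-e$ in the second), and an inclusion-exclusion count gives $|A_e|+|B_e|=|A|+|B|$.

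The strategy is then to choose $e$ so that $1\leq |B_e|<|B|$. Any $e\in A-B$ forces $B_e\neq\emptyset$, and the strict inequality $|B_e|<|B|$ translates to the condition $B+e\not\subseteq A$. Given such an $e$, the pair $(A_e,B_e)$ has strictly smaller minimum cardinality with the same total size, so applying the induction hypothesis yields
\[
|A+B|\geq |A_e+B_e|\geq |A_e|+|B_e|-1=|A|+|B|-1,
\]
as required.

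The main obstacle is showing that a suitable $e$ always exists under the assumption $|A+B|<p$. If no $e\in A-B$ satisfies $B+e\not\subseteq A$, then $B+e\subseteq A$ for \emph{every} $e\in A-B$; combined with $|A|\leq|B|$, this forces $A$ to be a single translate of $B$, and then tracking how the translation parameter depends on the chosen representatives reveals that the stabiliser of the relevant set in $\Z_p$ must be nontrivial. Since $p$ is prime, this stabiliser is all of $\Z_p$, contradicting $|A+B|<p$. This is the step where the primality hypothesis enters essentially.
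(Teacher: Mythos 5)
The paper states this lemma (the classical Cauchy--Davenport theorem) without proof, so there is no argument of the authors' to compare against; your Dyson $e$-transform route is the standard one, and its local verifications are all correct: $A_e+B_e\subseteq A+B$, $|A_e|+|B_e|=|A|+|B|$, the characterisation of when $1\le|B_e|<|B|$, and the treatment of empty sets. However, there is a genuine flaw in how you set up the induction. You induct on $\min(|A|,|B|)$ with $|A|\le|B|$, and you claim that any $e\in A-B$ with $B+e\not\subseteq A$ yields a pair $(A_e,B_e)$ of \emph{strictly smaller minimum cardinality}. That is false: the transform shrinks $B$ (the larger set) and grows $A$, so $\min(|A_e|,|B_e|)$ need not drop below $|A|$. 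Concretely, take $A=\{0,1,2,3,4\}$ and $B=\{0,1,\dots,9\}$ in $\Z_p$ for large $p$, and $e=0$: then $e\in A-B$ and $B+e\not\subseteq A$, but $B_e=A$ and $A_e=B$, so $\min(|A_e|,|B_e|)=5=\min(|A|,|B|)$ and the induction hypothesis does not apply. As written, the induction does not close.

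The repair is standard: drop the normalisation $|A|\le|B|$ and induct on $|B|$ alone, the quantity that provably decreases (one finds $e$ with $1\le|B_e|<|B|$ or lands in the degenerate case), with base case $|B|=1$ and the inductive hypothesis quantified over all pairs whose second member is smaller. With that change the degenerate case also simplifies: if $B+e\subseteq A$ for every $e\in A-B$, then $A+(B-B)\subseteq A$, and since $|B|\ge 2$ and $p$ is prime, the subgroup generated by $B-B$ is all of $\Z_p$, forcing $A=\Z_p$ and $|A+B|=p$; no appeal to $|A|\le|B|$ or to $A$ being a translate of $B$ is needed. (Your version of that step, via the stabiliser of $B$, is in fact correct once $|A|\ge2$ is in force, so the only real defect is the choice of induction parameter.)
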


The following result was proven by Olson \cite[Theorem~2]{Olson}.
\begin{Lem}
\label{Lem:Olson}
Let $A\subseteq\Z_p$ be a set with all elements distinct and $|A|=s$. Suppose
that for all $a\in A$, $-a\not\in A$; in particular, $0\not\in A$. Then we have
\[
|\Sigma(A)|\geq\min(\frac{p+3}{2}, \frac{s(s+1)}{2}+\delta),
\]
where
\[
\delta=\begin{cases} 1, & s\equiv 0\pmod{2}\\
0, & s\equiv 1\pmod{2}
\end{cases}.
\]
\end{Lem}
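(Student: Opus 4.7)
The plan is to prove the lemma by induction on $s = |A|$, using a judicious ordering of the elements. Write $A = \{a_1, \dots, a_s\}$ and set $A_k = \{a_1, \dots, a_k\}$, $T_k := \Sigma(A_k)$, where by convention $T_k$ contains the empty sum $0$. The recursion $T_{k+1} = T_k \cup (T_k + a_{k+1})$ together with Cauchy--Davenport (Lemma~\ref{Lem:CD}) applied to $T_k$ and $\{0, a_{k+1}\}$ immediately yields $|T_{k+1}| \geq \min(p, |T_k| + 1)$. This only gives linear growth in $k$, so the central task is to upgrade it to $|T_{k+1}| \geq |T_k| + k + 1$ whenever $|T_k| < (p+3)/2$; summing then delivers the desired bound $|T_s| \geq s(s+1)/2 + 1$, and the parity correction must be extracted by a separate argument.

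To secure the improved per-step growth I would choose the ordering greedily so that $(T_k + a_{k+1}) \setminus T_k$ is as large as possible. When this greedy step contributes fewer than $k+1$ new elements, the inverse form of Cauchy--Davenport (Vosper's theorem) should force $T_k$ to be an arithmetic progression with common difference $a_{k+1}$. At this point the hypothesis $A \cap (-A) = \emptyset$ does the main work: an arithmetic progression in $T_k$ containing $0$ and generated by $a_{k+1}$, taken together with the availability of further elements of $A$ still to be added, should imply some $-a_j \in A$, contradicting the assumption. This contradiction is what rules out the bad structural cases and restores the quadratic growth.

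The parity-dependent correction $\delta$ is most naturally explained via the involution $X \mapsto A \setminus X$ on subsets, which pairs $\sigma(X)$ with $\sigma(A) - \sigma(X)$ and supplies one extra sum in the even case. The ceiling $(p+3)/2$ is likewise natural: once $|T_k|$ exceeds $(p-1)/2$, the complementary-subset involution combined with $A \cap (-A) = \emptyset$ forces $T_k$ to be so close to filling $\Z_p$ that the argument saturates and no further growth is needed. The hardest part, I expect, will be propagating the structural case analysis through several consecutive Vosper-rigid steps while simultaneously keeping track of the parity bookkeeping; this is delicate enough that invoking Olson's result as a black box, rather than reproving it here, is clearly the right editorial choice.
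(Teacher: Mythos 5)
The paper offers no proof of this lemma at all: it is quoted verbatim from Olson's 1968 paper (Theorem~2 there), so your closing remark that the result should be invoked as a black box is exactly what the authors do. The problem is with the sketch itself, whose central step does not work. Your invariant $|T_{k+1}|\geq |T_k|+k+1$ is not a consequence of greedy ordering: for the model tight example $A=\{1,2,3,4\}$ in $\Z_p$, greedy forces $a_1=1$, may legitimately take $a_2=4$ (all choices tie with increment $2$), then $a_3=2$ with increment $4$, leaving $T_3=\{0,\dots,7\}$ and $a_4=3$, which contributes only $3=k$ new elements rather than $k+1=4$; the final count $11$ is still correct only because of the surplus banked at step $3$. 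So any correct version of this argument must track the cumulative total rather than a per-step inequality, and your sketch has no mechanism for that bookkeeping. Worse, the tool you propose for the bad cases cannot do the job: Vosper's theorem characterizes \emph{equality} in Cauchy--Davenport, i.e.\ it applies only when $|T_k+\{0,a_{k+1}\}|=|T_k|+1$, and says nothing when the increment is $2,3,\dots,k$ --- which is precisely the range you need to exclude. Ruling out those intermediate increments would require a Freiman-type ($3k-4$) inverse theorem modulo $p$, a far heavier tool with its own side conditions; and even in the genuinely Vosper-rigid case, the step ``$T_k$ an arithmetic progression with difference $a_{k+1}$ forces some $-a_j\in A$'' is asserted, not derived.

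The account of $\delta$ is also not right. The involution $X\mapsto A\setminus X$ shows $\Sigma(A)=\sigma(A)-\Sigma(A)$, a central symmetry of the sumset that holds for every $s$ and does not by itself produce an extra element when $s$ is even. In Olson's actual argument the parity enters because elements are adjoined \emph{two at a time}: the key lemma bounds $|H\cup(H+c)\cup(H+d)\cup(H+c+d)|$ from below, each pair-step raises the count by the odd amount $2j+3$, and $\delta$ records whether the induction starts from $\{0\}$ or must first absorb a single leftover element (the lossier case, giving $\delta=0$ for odd $s$). Likewise the saturation threshold $(p+3)/2$ comes out of that four-translate lemma, not from the complementation symmetry. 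In short, the skeleton you propose (one element at a time, Vosper for rigidity, complementation for parity) is structurally different from, and strictly weaker than, the argument that actually proves the lemma; citing Olson, as the paper does, is not merely an editorial convenience here.
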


In the sequel let $A\subseteq\Z_p^2$ be a zero-sum free set of size
$p-1+\Ol(\Z_p)$. Our aim is to show the following:
\begin{Theo}
\label{thm:Olson+}
Let $A\subseteq\Z_p^2$ be a zero-sum free set of size
$p-1+\Ol(\Z_p)$. Then there exists a subgroup $U\cong\Z_p$, such that
$|A\cap U|=\Ol(\Z_p)$, and all other elements of $A$ are contained in
one coset of $U$.
\end{Theo}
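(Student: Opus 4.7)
The plan is to fix a subgroup $U\subset\Z_p^2$ isomorphic to $\Z_p$, decompose $A=A_0\sqcup B$ with $A_0=A\cap U$ and $B=A\setminus U$, and work via the quotient $\pi\colon\Z_p^2\to\Z_p^2/U\cong\Z_p$. Then $\pi(A_0)=\{0\}$ and $\pi(B)\subseteq\Z_p\setminus\{0\}$. For any $S\subseteq B$ with $\sum_{b\in S}\pi(b)=0$, the sum $\sigma(S):=\sum_{b\in S}b$ lies in $U$, so we put
\[
W=\bigl\{\sigma(S) : S\subseteq B,\ \textstyle\sum_{b\in S}\pi(b)=0\bigr\}\subseteq U.
\]
Zero-sum freeness of $A$ forces $W\cap(-\Sigma(A_0))=\emptyset$ in $U\cong\Z_p$, hence $|W|+|\Sigma(A_0)|\leq p$. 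The theorem reduces to showing that, for a well-chosen $U$, this inequality must fail unless $B$ lies in a single coset of $U$; the strategy is dichotomous, splitting according to whether some subgroup intersects $A$ heavily, as announced in the introduction.

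\emph{Combinatorial case.} Suppose some subgroup has $|A\cap U|$ close to $\Ol(\Z_p)$, and pick one maximising $|A\cap U|$. Since $A_0$ is zero-sum free in $U\cong\Z_p$, $|A_0|\leq\Ol(\Z_p)\sim\sqrt{2p}$, and after a standard symmetrisation eliminating pairs $\{a,-a\}$ from $A_0$, Lemma~\ref{Lem:Olson} gives $|\Sigma(A_0)|\geq (p+3)/2$. It therefore suffices to show $|W|\geq (p-1)/2$ whenever $\pi(B)$ is non-constant. Decompose $B=B_1\sqcup\cdots\sqcup B_r$ with $\pi(B_j)=\{v_j\}$ distinct; for $r\geq 2$ and any tuple $(k_j)$ with $k_j\in[0,|B_j|]$ and $\sum_jk_jv_j\equiv 0\pmod p$, the Minkowski sum $\Sigma_{k_1}(B_1)+\cdots+\Sigma_{k_r}(B_r)\subseteq W$ has cardinality at least $\min\bigl(p,\sum_jk_j(|B_j|-k_j)+1\bigr)$ by Lemma~\ref{Lem:ksubsets} and iterated applications of Lemma~\ref{Lem:CD}. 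Since $|B_j|\leq p-1$ (otherwise $B_j$ would be a full coset, hence zero-sum) and $\sum_j|B_j|\geq p-1$, appropriate choices of $(k_j)$ push $|W|$ past $(p-1)/2$, yielding the required contradiction.

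\emph{Analytic case.} If no subgroup contains many elements of $A$, I would bypass the $U$-analysis and count zero-sum subsets of $A$ directly: writing $f(x)=\prod_{a\in A}\bigl(1+e_p(\langle x,a\rangle)\bigr)$ for $x\in\Z_p^2$ (with $e_p(t)=e^{2\pi it/p}$), the number of subsets $S\subseteq A$ with $\sum_{a\in S}a=0$ equals $p^{-2}\sum_{x\in\Z_p^2}f(x)$. The principal term $x=0$ contributes $2^{|A|}/p^2\gg 1$, and the non-principal terms are controlled by the equidistribution of $A$, forcing the count of non-empty zero-sum subsets to be positive, contradicting zero-sum freeness.

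Thus the analytic alternative cannot hold, so the combinatorial one does: some subgroup $U$ intersects $A$ in nearly $\Ol(\Z_p)$ elements, and then $B=A\setminus U$ must lie in a single coset $U+v$ with $v\notin U$. Since $|B|=p$ would give $B=U+v$ and $\sum_{b\in B}b=pv=0$, we conclude $|B|=p-1$ and $|A_0|=\Ol(\Z_p)$, as required. The main obstacle is the exponential-sum bound in the analytic case: its strength governs the admissible range of $p$, and pushing it down to $p>6000$ is the technical heart of the paper.
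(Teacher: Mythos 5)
Your high-level plan — concentration versus equidistribution, with an exponential-sum count of zero-sum subsets in the spread-out case — is the same dichotomy the paper announces, and your orthogonality identity is exactly the paper's. But both branches have genuine gaps. In the combinatorial branch, the claim that $|W|\geq(p-1)/2$ whenever $\pi(B)$ is non-constant does not follow from the argument you give: if $B$ meets many cosets of $U$ with one element each (which is perfectly compatible with ``some subgroup meets $A$ in nearly $\Ol(\Z_p)$ elements''), then every admissible tuple has $k_j\in\{0,1\}$ and $k_j(|B_j|-k_j)=0$, so the bound $\min(p,\sum_j k_j(|B_j|-k_j)+1)$ is trivial. The paper avoids this by working not with $A\cap U$ for subgroups but with $M=\max_{x,U}N(x,U,A)$ over all affine cosets, and by splitting into three regimes: $M\geq 2p/5$ is handled by structural lemmas imported from Gao--Ruzsa--Thangadurai (this is where the actual conclusion of the theorem, rather than a contradiction, is produced — your argument never delivers that structure); $\lfloor\sqrt{4p-7}\rfloor+1\leq M\leq 2p/5$ is killed by Lemma~\ref{Lem:Comb}, whose mechanism is different from your $W$-versus-$\Sigma(A_0)$ counting (it needs a subset $B$ whose projection represents every residue, plus a second-moment condition $\sum_i\lfloor N_i/2\rfloor\lceil N_i/2\rceil\geq p+1$ on the remainder); and only the regime of small $M$ goes to the exponential sums. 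Your dichotomy is never quantified, and as stated the two cases do not cover all configurations.

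In the analytic branch, ``the non-principal terms are controlled by the equidistribution of $A$'' is precisely the part that cannot be waved away: the nontrivial character contribution is $2^{|A|}\prod_j|\cos(\pi j/p)|^{\lambda_j}$, and this is \emph{not} small if the multiset $\pi(A)$ in some direction concentrates on residues near $0$ — small $M$ alone does not prevent this. The paper needs two further inputs: Lemma~\ref{Lem:ManyDistinct}, a separate combinatorial argument showing a zero-sum exists whenever some projection of $A$ has image of size below $p/5$, and then the selection of $p/5$ distinct projected values together with the explicit estimate $\int_{-1/10}^{1/10}\log\cos(\pi t)\,dt<-0.0033$ to force the product below $p^{-2}$ for $p>6000$. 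You correctly identify this as the technical heart but do not supply it, so the proposal is an outline rather than a proof.
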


Clearly, Theorem~\ref{thm:Olson+} implies Theorem~\ref{thm:Olson}. For
an affine subspace $x+U$ and a set $B$ define $N(x, U, B)=|B\cap(x+U)|$. Set
$M=M(A)=\max\limits_{x, U} N(x, U, A)$.

\begin{Lem}
\label{Lem:2p/5}
Suppose that $M\geq 2p/5$. Then Theorem~\ref{thm:Olson+} holds true
for $A$. 
\end{Lem}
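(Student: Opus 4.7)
\noindent The plan is to extract the structure of $A$ from the hypothesis $M \geq 2p/5$ in three steps. Let $W$ be the order-$p$ subgroup and $x+W$ the coset on which the maximum $M = |A \cap (x+W)|$ is attained. First I would rule out $x \in W$: in that case $A \cap W$ would be a zero-sum free subset of $W \cong \Z_p$ of size at least $2p/5$, but an upper bound of the form $\Ol(\Z_p) = O(\sqrt p)$ (such as Szemer\'edi's, or the sharper $\sqrt{2p}$ from Nguyen--Szemer\'edi--Vu) gives $\Ol(\Z_p) < 2p/5$ for $p > 6000$, contradicting zero-sum freeness of $A$. So $x \notin W$; set $A_0 := A \cap (x+W)$.

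\medskip
Next I would bookkeep the problem in coordinates. Fix a complement $V$ with $\Z_p^2 = W \oplus V$, identify $V \cong \Z_p$, and for each $v \in V$ set $A_v = A \cap (v + W)$, $n_v = |A_v|$, with $A_v' \subseteq W \cong \Z_p$ its lift. A zero-sum subset of $A$ is encoded by non-negative integers $b_v \leq n_v$ satisfying the $V$-constraint $\sum_v b_v v = 0$, together with a choice of $S_v \subseteq A_v'$ of size $b_v$ satisfying the $W$-constraint $\sum_v \sum_{s \in S_v} s = 0$. Combining Lemmas~\ref{Lem:ksubsets} and~\ref{Lem:CD}, the set of achievable $W$-sums from a fixed $(b_v)$ has cardinality at least
\[
\min\Bigl(p,\ 1 + \sum_v b_v(n_v - b_v)\Bigr),
\]
so zero-sum freeness of $A$ forces $\sum_v b_v(n_v - b_v) \leq p - 2$ for every non-trivial $(b_v)$ satisfying the $V$-constraint.

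\medskip
Let $v_0 \in V$ be the class of $x + W$, so $v_0 \neq 0$ and $n_{v_0} = M$. The heart of the argument is to show that every $v \notin \{0, v_0\}$ has $n_v = 0$, i.e., all of $A \setminus A_0$ lies in $W$. Supposing some $v \notin \{0, v_0\}$ has $n_v \geq 1$, take $(b_{v_0}, b_v, b_0) = (k, \ell, m)$ with all other $b$'s zero; the $V$-constraint becomes $k \equiv -\ell v / v_0 \pmod p$. As $(\ell, m)$ varies over $\{1,\ldots,n_v\} \times \{0,\ldots,|A\cap W|\}$, I would exhibit an admissible triple yielding
\[
k(M - k) + \ell(n_v - \ell) + m(|A \cap W| - m) \geq p - 1,
\]
exploiting that for $M \geq 2p/5$ the set of $k \in [1, M-1]$ with $k(M-k) \geq p - 1$ is an interval of length of order $M$, while the extra contribution from $m$ (of size up to $|A \cap W|^2/4$) covers the shortfall when only a few values of $\ell$ land in the good $k$-interval. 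This contradicts the bound of the previous paragraph.

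\medskip
Once $A \setminus A_0 \subseteq W$ is established, extremality closes the argument: $|A_0| + |A \cap W| = |A| = p - 1 + \Ol(\Z_p)$, while $|A_0| \leq p - 1$ (the full coset $x + W$ sums to zero in $\Z_p^2$, so $A_0$ cannot equal all of it) and $|A \cap W| \leq \Ol(\Z_p)$ (being zero-sum free in $\Z_p$), forcing both inequalities to be equalities, which is the conclusion of Theorem~\ref{thm:Olson+}. I expect the main obstacle to be the third paragraph: the quantitative case analysis depends on how $n_v$ and $|A \cap W|$ compare against $p$, and the threshold $p > 6000$ should enter precisely there, to guarantee that the various arithmetic progressions and intervals sweep out enough of $\Z_p$ for the pigeonhole to succeed in every subcase (in particular when $n_v = 1$).
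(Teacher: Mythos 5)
The paper does not actually prove this lemma: its entire proof is a citation of Lemmas~3.4 and~3.5 of Gao, Ruzsa and Thangadurai \cite{Ruzsa}, which are substantial results in their own right. Your reconstruction is fine in its first and second steps, but the third step --- which you yourself flag as the heart of the matter --- has a genuine gap, and the specific reduction you propose cannot close it. You restrict to coefficient vectors $(b_v)$ supported on the three classes $v_0$, $v$, $0$. The class $0$ contributes nothing to the $V$-constraint, so once $\ell=b_v$ is chosen, $k$ is forced to the single residue $-\ell v/v_0 \bmod p$; when $n_v=1$ you therefore get exactly one candidate $k$, over which the hypotheses give no control: it may fail to lie in $[0,M]$ at all, or lie within distance $2$ of the endpoints, where $k(M-k)<p-1$. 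The term $m(|A\cap W|-m)$ cannot rescue this, since $|A\cap W|\le\Ol(\Z_p)-1=O(\sqrt{p})$ makes it at most about $p/2$, and it does not interact with the $V$-constraint. So a single stray element at a class $v$ with $-v/v_0\bmod p\in\{0,1,2\}\cup\{M-2,\dots,p-1\}$ defeats every admissible triple; roughly $3p/5$ of the residues are of this form while only about $2p/5$ of the elements sit in the big coset, so you cannot assume the stray elements avoid them. To handle such configurations one must combine stray elements from \emph{several} classes so that their total $V$-image can be steered into the range where $k(M-k)$ is large --- exactly the role of the hypothesis that $\pi(B)$ represents every element of $\Z_p$ in Lemma~\ref{Lem:Comb}, and of the corresponding machinery in \cite{Ruzsa}. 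Your sketch contains no mechanism of this kind.

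A smaller point: a zero-sum free subset of $\Z_p$ has at most $\Ol(\Z_p)-1$ elements, not $\Ol(\Z_p)$, so your closing count gives $|A|\le p-2+\Ol(\Z_p)$, an outright contradiction rather than the two claimed equalities. This is consistent with the paper --- the structure asserted in Theorem~\ref{thm:Olson+} is itself impossible, which is precisely how it implies Theorem~\ref{thm:Olson} --- but you should state the conclusion as a contradiction rather than present the equalities as attainable.
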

\begin{proof}
This follows immediately from \cite[Lemma~3.4]{Ruzsa} and
\cite[Lemma~3.5]{Ruzsa}.
\end{proof}

The following is the main technical result of the combinatorial part.
\begin{Lem}
\label{Lem:Comb}
Let $U$ be a non-trivial subgroup, and $B\subseteq A$ a set. Let
$\pi:\Z_p^2\rightarrow \Z_p$ be a projection with kernel $U$, and let
$x_1, \ldots, x_p$ be representatives of $\Z_p^2/U$. Suppose
that the multi-set $\pi(B)$ represents each element of $\Z_p$ as a
(possibly empty) subset sum, and that $\sum_i \lfloor N(x_i, U, A\setminus
B)/2\rfloor \lceil N(x_i, U, A\setminus B)/2\rceil \geq p+1$. Then
$A$ contains a zero-sum.
\end{Lem}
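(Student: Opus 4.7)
The strategy is to construct a zero-sum by combining a carefully chosen $B'\subseteq B$ with $k_i:=\lfloor N_i/2\rfloor$ elements selected from the coset $(A\setminus B)\cap(x_i+U)$, where $N_i:=N(x_i,U,A\setminus B)$. Hypothesis~2 will give us enough freedom in the $U$-direction, and hypothesis~1 will let us cancel off the induced projection.

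For each $i$, let $C_i\subseteq U$ denote the set of $U$-parts of the elements of $(A\setminus B)\cap(x_i+U)$, so $|C_i|=N_i$ and those elements are $\{x_i+w:w\in C_i\}$. Lemma~\ref{Lem:ksubsets}, applied inside $U\cong\Z_p$, gives
\[
|\Sigma_{k_i}(C_i)|\ \ge\ \min\bigl(p,\ \lfloor N_i/2\rfloor\lceil N_i/2\rceil+1\bigr),
\]
since $k_i(N_i-k_i)=\lfloor N_i/2\rfloor\lceil N_i/2\rceil$. Iterating Lemma~\ref{Lem:CD} over the cosets with $C_i\ne\emptyset$, the Minkowski sum $\Sigma:=\sum_i\Sigma_{k_i}(C_i)\subseteq U$ has size at least
\[
\min\Bigl(p,\ 1+\sum_i\lfloor N_i/2\rfloor\lceil N_i/2\rceil\Bigr)\ \ge\ p
\]
by hypothesis~2, so $\Sigma=U$. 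Consequently, for any prescribed $u^\ast\in U$ we can choose $k_i$-subsets $T_i\subseteq C_i$ with $\sum_i\sum_{w\in T_i}w=u^\ast$; the corresponding selected elements of $A\setminus B$ then sum to $\sum_i k_i x_i+u^\ast$ in $\Z_p^2$.

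It remains to match this against a suitable $B'$. By hypothesis~1 choose $B'\subseteq B$ with $\sum_{b\in B'}\pi(b)=-\sum_i k_i\pi(x_i)$; then $v:=\sum_{b\in B'}b+\sum_i k_i x_i$ projects to $0$, i.e.\ $v\in U$. Setting $u^\ast=-v$ above, the subset
\[
B'\,\cup\,\bigcup_i\{x_i+w:w\in T_i\}\ \subseteq\ A
\]
sums to $0$. It is non-empty: if $B'=\emptyset$ and every $k_i=0$, then $N_i\le 1$ for all $i$, forcing $\sum_i\lfloor N_i/2\rfloor\lceil N_i/2\rceil=0$ and contradicting hypothesis~2. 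The only delicate point is the Cauchy--Davenport bookkeeping, in which the $+1$ per coset coming from Lemma~\ref{Lem:ksubsets} exactly offsets the $-(t-1)$ from a $t$-fold iteration of Lemma~\ref{Lem:CD}, leaving the decisive $+1$ that pushes the bound past $p$.
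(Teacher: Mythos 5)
Your proof is correct and follows essentially the same route as the paper: apply the Dias da Silva--Hamidoune bound to the $\lfloor N_i/2\rfloor$-subset sums within each coset, iterate Cauchy--Davenport across cosets so the resulting sums fill an entire coset of $U$, and then cancel the projection using a subset of $B$. Your write-up is in fact slightly more careful than the paper's (explicit non-emptiness check and the $+1$ bookkeeping), but the argument is the same.
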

\begin{proof}
Among each set $(A\setminus B)\cap(x_i+U)$ we choose all subsets of size
$\lfloor N(x_i, U, A\setminus B)$ and add them up. By
Lemma~\ref{Lem:ksubsets} we obtain in this way at least $\lfloor
N(x_i, U, A\setminus B)/2\rfloor \lceil N(x_i, U, A\setminus
B)/2\rceil$ elements in $\Z_p^2$, each of which has the same image
under $\pi$. By Lemma~\ref{Lem:CD} and the assumption we find that
there exists some $x$, such that every element of $x+U$ is a subset
sum of $A\setminus B$. On the other hand, there is a subset of $B$
with sum contained in $(-x)+U$, hence, we can combine a subset sum of
$B$ with a subset sum of $A\setminus B$ to become a zero-sum. 
\end{proof}

We shall repeatedly apply this Lemma to reduce the size of the numbers
$N(x, U, A)$.

\begin{Lem}
\label{Lem:MMigros}
Suppose that $p\geq 29$ and $2p/5\geq M\geq \lfloor\sqrt{4p-7}\rfloor +1$. Then
$A$ contains a zero-sum. 
\end{Lem}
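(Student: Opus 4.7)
The idea is to contradict the zero-sum-freeness of $A$ via an application of Lemma~\ref{Lem:Comb}. Fix the subgroup $U$ for which $M=N(x_0,U,A)$ is attained, write $\pi$ for the projection with kernel $U$, set $F=A\cap(x_0+U)$ so that $|F|=M\ge\ell$, and let $n_i=N(x_i,U,A)$. I want to select $B\subseteq A$ verifying both hypotheses of that lemma: (i) $\Sigma(\pi(B))=\Z_p$, and (ii) $\sum_i\lfloor n_i'/2\rfloor\lceil n_i'/2\rceil\ge p+1$ where $n_i'=N(x_i,U,A\setminus B)$. I aim to keep the heavy fiber $F$ essentially intact in $A\setminus B$, so that its contribution $\lfloor M/2\rfloor\lceil M/2\rceil\ge\lfloor\ell^2/4\rfloor$ supplies the bulk of the required $p+1$ in (ii), while selecting $B$ itself mostly from $A\setminus F$.

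\textbf{Covering $\Z_p$.} For hypothesis (i), I would split according to $s=|\pi(A\setminus F)|$. In the spread case $s\ge\ell$, let $B$ consist of one element from each of $\ell$ distinct non-heavy cosets; then $\pi(B)$ is a set of $\ell$ distinct elements of $\Z_p$, and Lemma~\ref{Lem:ksubsets} with $k=\lfloor\ell/2\rfloor$ gives $\Sigma_k(\pi(B))=\Z_p$. In the concentrated case $s<\ell$, pigeonhole forces some non-heavy fiber $F_j$ to contain at least $(|A|-M)/s=\Omega(\sqrt p)$ elements; letting $B$ include most of this $F_j$ plus suitable portions of the remaining fibers, iterated applications of Lemma~\ref{Lem:CD} to the Minkowski sum $\{0,c,\ldots,Mc\}+\{0,c_j,\ldots\}+\cdots$ push the total cardinality past $p-1$ and thereby give $\Sigma(\pi(B))=\Z_p$.

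\textbf{The second inequality and conclusion.} For (ii), the intact heavy fiber contributes $\lfloor\ell^2/4\rfloor\ge p-2$, and the at most three extra units needed to reach $p+1$ are provided by reserving in $A\setminus B$ either a single non-heavy fiber of size $\ge 3$ or two of size $\ge 2$. The hypothesis $p\ge 29$ together with the bound $|A|-M\ge 3p/5$ ensures that after committing $\ell$ coset-singletons to $B$ there is still enough mass left over for such fibers to exist. Lemma~\ref{Lem:Comb} then produces a zero-sum in $A$, contradicting the standing hypothesis. The main obstacle is the borderline case $M=\ell$ with $A\setminus F$ almost injective under $\pi$, where the heavy-fiber contribution $\lfloor\ell^2/4\rfloor$ can fall short of $p+1$ by a small additive constant for particular primes; handling it requires a careful reservation of non-heavy fibers (or, if none are available, a switch to a different projection direction $U'$), and is precisely where the lower bound on $p$ is invoked.
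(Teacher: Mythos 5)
Your overall architecture (invoke Lemma~\ref{Lem:Comb}, with a heavy fiber supplying the quadratic mass and a small set $B$ supplying the covering of $\Z_p$) is reasonable, and your spread case $s\ge\ell$ is essentially sound. The concentrated case $s<\ell$, however, does not work as written, and it is precisely where a new idea is needed. If $\pi(B)$ is a multiset supported on $t$ distinct values $c_1,\dots,c_t$ with multiplicities $m_1,\dots,m_t$, then $\Sigma(\pi(B))=\sum_j\{0,c_j,\dots,m_jc_j\}$, and iterating Lemma~\ref{Lem:CD} over these progressions only guarantees $|\Sigma(\pi(B))|\ge\min(p,\sum_j m_j+1)=\min(p,|B|+1)$; this cannot be improved in general (take $c_j=jc$ with the $m_j$ concentrated on small $j$). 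So to force $\Sigma(\pi(B))=\Z_p$ by ``pushing the cardinality past $p-1$'' you need $|B|\ge p-1$, which leaves $|A\setminus B|\le\Ol(\Z_p)\approx\sqrt{2p}$ and hence $\sum_i\lfloor n_i'/2\rfloor\lceil n_i'/2\rceil\le\lfloor\Ol(\Z_p)/2\rfloor\lceil\Ol(\Z_p)/2\rceil\approx p/2$, far below $p+1$. Conversely, if you keep $F$ plus enough extra mass inside $A\setminus B$, then $|B|$ is roughly $3p/5$ and the trivial count gives only $|\Sigma(\pi(B))|\ge 3p/5+1$. No choice of $B$ satisfies both hypotheses as long as you only use the bound $|\Sigma(\pi(B))|\ge|B|+1$.

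The missing ingredient is to beat that bound by one unit per pair: a two-element set $\{a,b\}$ with $a,b\ne 0$, $a\ne b$, $a\ne -b$ has $|\Sigma(\{a,b\})|=4$, so it advances the Cauchy--Davenport chain by $3$ rather than $2$. The paper exploits this by reversing your decomposition: $A\setminus B$ is taken to be just $\ell$ elements of the heavy fiber (by the second part of Lemma~\ref{Lem:ksubsets} these already yield a full coset of $U$, which is all that Lemma~\ref{Lem:Comb} actually requires), and everything else --- including the surplus $M-\ell$ elements of the heavy fiber --- goes into $B$, so $|B|\ge p-\sqrt p$. Since $M\le 2p/5$, no value of $\pi(B)$ has multiplicity $\ge 2p/5$, so one extracts $p/5-\sqrt p$ disjoint such pairs and gets $|\Sigma(\pi(B))|\ge 3(p/5-\sqrt p)+(p-2(p/5-\sqrt p))=6p/5-\sqrt p\ge p$ for $p\ge 29$, with no case split on $s$ at all. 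Finally, your borderline worry ($M=\ell$ with all non-heavy fibers singletons) is a genuine defect of your decomposition --- for $p=31$ one has $\ell=11$ and $\lfloor\ell/2\rfloor\lceil\ell/2\rceil=30<p+1$ --- and switching to another subgroup $U'$ does not repair it, since in that situation every $U'$-fiber of $A$ can be small; this issue also disappears once $B$ is chosen as in the paper.
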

\begin{proof}
Let $U$ be a subgroup such that there exist
some $x$ with $N(x, U, A)=M$, and let $\pi$ be a projection with
kernel $U$. We choose $\lfloor\sqrt{4p-7}\rfloor +1$ elements in one
coset, and let $B$ be the complement of this set. Consider the
multi-set $\overline{B}=\pi(B)$. Then $|\overline{B}|\geq
p-\sqrt{p}$. Moreover, $\overline{B}$ contains no element with
multiplicity $\geq 2p/5$, hence, in $B$ we can find a system of
$p/5-\sqrt{p}$ disjoint subsets containing 3 different elements, that
is, we find $p/5-\sqrt{p}$ subsets containing two different elements,
which are not inverse to each other. Hence, we have
\[
p\geq |\Sigma(B)| \geq \min(p, 3(p/5-\sqrt{p}) + (p-2(p/5-\sqrt{p})) = 
\min(p, 6/5 p - \sqrt{p}) = p,
\]
and we see that we can apply Lemma~\ref{Lem:Comb} to obtain our claim.
\end{proof}

We now combine Lemma~\ref{Lem:Comb} with an estimate for exponential
sums to obtain a criterion for our theorem to hold which is
numerically applicable.

\begin{Lem}
\label{Lem:Numerik}
Let $p>800$ be a prime number. Let $A\subseteq\Z_p^2$ be a subset with
$|A|=p+\mathrm{Ol}(\Z_p)$. For a subgroup $U\cong\Z_p$ fix a
complement $V$, and define $\lambda_j^U=N(j, U, A)$, where $j$ is
viewed as an element of $V$ via the isomorphism $\Z_p\cong V$. 
Suppose that one
of the following two conditions holds true.
\begin{enumerate}
\item There exists a subgroup $U$, such that the following holds
  true. Denote by $J$ the set of indices $j$ such that $\lambda_j$ is
  odd. Suppose there exists a set of integers $I\subseteq\Z_p$, such
  that $\lambda_i\geq 1$ for all $i\in I$, $\Sigma(I\cup J)=\Z_p$, and
  $\sum_i \lfloor\lambda_i^*/2\rfloor\lceil\lambda_i^*/2\rceil\geq
  p-1$, where
\[
\lambda_i^*=\begin{cases} \lambda_i-1, & i\in I,\\
\lambda_i, & \mbox{otherwise.}
\end{cases}
\]
\item For all subgroups $U$ and all isomorphisms $\Z_p\cong V$ we have the bound
\begin{equation*}
\prod_{i=0}^{p-1}|\cos\frac{j\pi}{p}|^{\lambda_j}\leq\frac{1}{p^2}.
\end{equation*}
\end{enumerate}
Then every subset $A\subseteq\Z_p^2$ with $|A|=p+\mathrm{Ol}(\Z_p)$ contains a
zero-sum. 
\end{Lem}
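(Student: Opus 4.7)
The plan is to treat the two stated conditions separately: condition~(1) is purely combinatorial and reduces to a careful application of Lemma~\ref{Lem:Comb}, while condition~(2) is handled by a standard Fourier--analytic count of zero-sum subsets of~$A$.

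For condition~(1) I would fix the subgroup $U$ and index set $I$ provided by the hypothesis and apply Lemma~\ref{Lem:Comb} with $B$ chosen to consist of exactly one element from each coset $i+U$ with $i\in I$. This choice gives $N(i,U,A\setminus B)=\lambda_i^*$ for every $i$, so our inequality $\sum_i\lfloor\lambda_i^*/2\rfloor\lceil\lambda_i^*/2\rceil\ge p-1$ falls two short of the $\ge p+1$ demanded by Lemma~\ref{Lem:Comb}, and moreover $\Sigma(\pi(B))=\Sigma(I)$ need not cover all of $\Z_p$. I would close both gaps at once by exploiting the ``odd-coset slack'': for each $j\in J\setminus I$ the value $\lambda_j=\lambda_j^*$ is odd, and Lemma~\ref{Lem:ksubsets} gives the same maximum count $\lfloor\lambda_j/2\rfloor\lceil\lambda_j/2\rceil$ whether we take subsets of $A\cap(j+U)$ of size $\lfloor\lambda_j/2\rfloor$ or of size $\lceil\lambda_j/2\rceil$, but the two choices place the image of that partial sum into cosets of $U$ differing exactly by~$j$. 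Ranging this choice over all $j\in J\setminus I$ shifts the total image of the $A\setminus B$ sum through a translate of $\Sigma(J\setminus I)$ at no cost to the floor-ceiling count; combined with $\Sigma(\pi(B))=\Sigma(I)$, the effective image-coverage becomes $\Sigma(I)+\Sigma(J\setminus I)=\Sigma(I\cup J)=\Z_p$. The Cauchy--Davenport assembly of Lemma~\ref{Lem:CD}, carried out inside each branch with the reduced coset counts $\lambda_i^*$, then produces the desired zero-sum exactly as in the proof of Lemma~\ref{Lem:Comb}.

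For condition~(2) I would count the number $N$ of subsets $S\subseteq A$ with $\sum S=0$ by orthogonality of characters on $\Z_p^2$:
\[
N=\frac{1}{p^2}\sum_{t\in\Z_p^2}\prod_{a\in A}\bigl(1+e^{2\pi i\langle t,a\rangle/p}\bigr).
\]
The term $t=0$ contributes $2^{|A|}/p^2$. For every nonzero $t$, the character $a\mapsto e^{2\pi i\langle t,a\rangle/p}$ factors through a surjection $\Z_p^2\to\Z_p^2/U_t\cong\Z_p$ where $U_t$ is a subgroup of order $p$; grouping the elements of $A$ by cosets of $U_t$ and using $|1+e^{i\theta}|=2|\cos(\theta/2)|$ gives
\[
\Bigl|\prod_{a\in A}\bigl(1+e^{2\pi i\langle t,a\rangle/p}\bigr)\Bigr|=2^{|A|}\prod_{j=0}^{p-1}\Bigl|\cos\frac{j\pi}{p}\Bigr|^{\lambda_j^{U_t}},
\]
which the hypothesis bounds by $2^{|A|}/p^2$. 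Summing over the $p^2-1$ nonzero $t$ the total error is at most $(p^2-1)\,2^{|A|}/p^4<2^{|A|}/p^2$, so $N\ge 2^{|A|}/p^4$. Since $|A|=p+\Ol(\Z_p)\ge p+\lfloor\sqrt{2p}\rfloor$ and $p>800$, this lower bound vastly exceeds $1$; as the empty subset accounts for only one zero-sum, the excess guarantees a nonempty zero-sum subset of~$A$.

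The main obstacle lies in part~(1): one must match the branches arising from the odd-coset flexibility with the Cauchy--Davenport estimate so as to extract exactly the two units of slack between the hypothesis $\ge p-1$ and the $\ge p+1$ of Lemma~\ref{Lem:Comb}, and verify that the augmented image-coverage $\Sigma(I\cup J)=\Z_p$ really does combine with the within-coset count to yield a zero-sum in $\Z_p^2$ rather than merely in the quotient $V$. Part~(2) is essentially a routine Fourier computation, the only delicate point being to identify the cosine product of the hypothesis with the character-sum error term uniformly for every non-trivial character on $\Z_p^2$.
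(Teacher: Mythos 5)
Your proposal is correct and follows essentially the same route as the paper: for condition (1) you use exactly the paper's mechanism (one designated element per $I$-coset plus the $\lfloor\lambda_j/2\rfloor$ versus $\lceil\lambda_j/2\rceil$ parity slack in the odd cosets to realize any shift in $\Sigma(I\cup J)$, with Lemma~\ref{Lem:ksubsets} and Cauchy--Davenport covering the fiber direction from the bound $p-1$), and for condition (2) your orthogonality computation with main term $2^{|A|}/p^2$ and error $(p^2-1)2^{|A|}/p^4$ is the paper's argument verbatim. The only difference is presentational: you phrase (1) as a patch of Lemma~\ref{Lem:Comb} while the paper argues directly, which does not change the substance.
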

\begin{proof}
Let $A$ be a subset of $\Z_p^2$ with $|A|=p+\Ol(\Z_p)$.

Suppose that there exists a subgroup $U$, such that for the partition
$\lambda_i=N(U, i, A)$ the first condition holds true. Set $x=\sum
i\lfloor\lambda_i^*/2\rfloor$. By assumption we can choose a subset of
$I\cup J$ adding up to $-x$, let $I', J'$ be the intersection of this
set with $I$ and $J$, respectively. Then we choose elements $x_j$ in
$A\cap(j, 0)+(0, 1)\Z_p$ for all $j\in I'\cup J'$, these elements sum
up to an element $s$ with first coordinate $-x$. Hence, if we choose a
set $A_j$ consisting of $\lfloor\lambda_i^*/2\rfloor$ elements in $(j,
0)+(0, 1)\Z_p$, then $\sum_j\sum_{a\in A_j} a + \sum_{j\in I'\cup J'}
x_j$ has first coordinate 0. To prove that $A$ contains a zero-sum, it
suffices to show that by choosing the sets $A_j$ in all possible ways,
all elements in $(0, 1)\Z_p$ can be reached, and from
Lemma~\ref{Lem:ksubsets} and \ref{Lem:CD} we see that this is the case
if $\sum_j \lfloor\lambda_j^*/2\rfloor\lceil\lambda_j^*/2\rceil\geq
p-1$, thus, the first condition is sufficient.

Hence, we may assume that for each subgroup $U$ the partition $N(i, U,
A)$ satisfies the second condition. Write $e(x)=e^{2\pi i x/p}$; we
view this as a function $e:\Z_p\rightarrow\C$. Then
using orthogonality we see that the number of subsets of $A$ adding up
to 0 equals
\[
\frac{1}{p^2}\sum_{\alpha\in\Z_p^2} \prod_{a\in A} 1+e(\langle a,
\alpha\rangle).
\]
Clearly, the summand $\alpha=0$ contributes $\frac{2^{|A|}}{p^2}$. We
have
\begin{multline*}
\prod_{a\in A} |1+e(\langle a, (0, 1)\rangle)| =
\prod_{j\in\Z_p}|1+e(j)|^{N(j, \langle(0, 1)\rangle, A)}\\
 = 2^{|A|}\prod_{j\in\Z_p}|\cos(\pi j/p)|^{N(i, \langle(0, 1)\rangle,
   A)}
 \leq \frac{2^{|A|}}{p^2},
\end{multline*}
where in the last step we used the second condition. Hence, the number
of zero-sums is bounded from below by
\[
\frac{2^{|A|}}{p^2} - \frac{p^2-1}{p^2}\frac{2^{|A|}}{p^2} =
\frac{2^{|A|}}{p^4}\geq 2,
\]
provided that $p\geq 11$, that is, there exists a non-empty subset
with sum 0.
\end{proof}

Note that the two conditions in the lemma work in different
directions: While the first condition says that most of the
$\lambda_j$ are small, the second condition says that most of the
weight of the partition lies on indices $i$ which are close to 0 or
to $p$, from this difference we shall obtain our result.

\begin{Lem}
\label{Lem:ManyDistinct}
Suppose that $p>1024$ and that there exists a subgroup $U$ such that the
image $\pi(A)$ of the 
projection has less than $p/5$ elements. Then $A$ contains a zero-sum.
\end{Lem}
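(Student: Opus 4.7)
The plan is to verify condition~(1) of Lemma~\ref{Lem:Numerik} for the subgroup $U$ given in the hypothesis. Write $S=\pi(A)$, $\lambda_j=N(j,U,A)$, $M=\max_j\lambda_j$, and let $J\subseteq S$ be the indices with $\lambda_j$ odd; set $\ell:=\lfloor\sqrt{4p-7}\rfloor+1$.

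I would begin with two standard reductions. If $\lambda_0\geq\Ol(\Z_p)$, then $A\cap U$ is a subset of $U\cong\Z_p$ of size at least $\Ol(\Z_p)$ and already contains a zero-sum, so one may assume $\lambda_0<\Ol(\Z_p)$. By Lemmas~\ref{Lem:2p/5} and~\ref{Lem:MMigros} one may also assume $M<\ell$. Together these force $|S|\geq|A|/M>p/\ell>\sqrt{p}/2$. Setting $L:=\sum_j\lfloor\lambda_j/2\rfloor\lceil\lambda_j/2\rceil$, the identity
\[
\sum_j\lfloor\lambda_j^*/2\rfloor\lceil\lambda_j^*/2\rceil = L - \sum_{j\in I}\lfloor\lambda_j/2\rfloor
\]
together with the Cauchy--Schwarz bound $L\geq(|A|^2/|S|-|S|)/4>(5p-p/5)/4=6p/5$ (obtained from $|S|<p/5$ and $|A|\geq p$) reduces condition~(1) to finding $I\subseteq S$ with $\Sigma(I\cup J)=\Z_p$ and loss $\sum_{j\in I}\lfloor\lambda_j/2\rfloor\leq p/5$.

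I would then split on $|S|$. In the case $|S|\geq\ell$ I would take $I\subseteq S\setminus J$ to consist of the $\max(\ell-|J|,0)$ indices on which $\lfloor\lambda_j/2\rfloor$ is smallest, so that $|I\cup J|\geq\ell$ and Lemma~\ref{Lem:ksubsets} delivers $\Sigma(I\cup J)=\Z_p$; a ``smallest-values~$\leq$~mean'' estimate bounds the loss by roughly $\ell\cdot|A|/(2|S|)$, which under $|S|<p/5$ stays below the budget $p/5$ whenever $|S|\leq(p-2\ell)/4$, and this holds comfortably for $p>1024$. In the case $|S|<\ell$, the Cauchy--Schwarz bound improves drastically to $L=\Omega(p^{3/2}/|S|)$, making the sum condition automatic, but now $|I\cup J|\leq|S|<\ell$ and Lemma~\ref{Lem:ksubsets} no longer yields $\Sigma(I\cup J)=\Z_p$ directly. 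I would then switch to Lemma~\ref{Lem:Comb}: the iterated Cauchy--Davenport (Chowla) bound on the multi-set $\pi(A)$ gives $|\Sigma(\pi(A))|\geq|A|-\lambda_0+1\geq p$, so some $B\subseteq A$ has $\Sigma(\pi(B))=\Z_p$, and one has to arrange $B$ so that $A\setminus B$ still carries enough mass, spread over one or more cosets of size $\Omega(\sqrt p)$, to satisfy the sum condition $\sum_i\lfloor N(x_i,U,A\setminus B)/2\rfloor\lceil N(x_i,U,A\setminus B)/2\rceil\geq p+1$ of Lemma~\ref{Lem:Comb}.

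The hard part will be this second case: Chowla's inequality is tight for $\pi(A)$ exactly when $S$ sits inside a short arithmetic progression, and under pure Chowla the sum condition of Lemma~\ref{Lem:Comb} just barely fails, since $|A\setminus B|\leq\Ol(\Z_p)$ while $M<\ell$ caps the sum at roughly $M^2/4<p$. One must therefore exploit the combined constraints $|S|<\ell$, $M<\ell$, and $|S|\cdot M\geq|A|\geq p$ to show that $S$ cannot be too ``linear''---yielding a Freiman-type slack in Chowla that lets $|B|$ drop below $p-1$, so that $A\setminus B$ can still contain a coset of size $\geq 2\sqrt{p+1}$. Making this quantitative is the delicate step that pins the threshold at $p>1024$.
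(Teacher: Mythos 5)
Your first case ($|S|\geq\ell$) is sound in substance and is a legitimate alternative to the paper's treatment: the paper simply takes $B$ to be $\ell$ elements of $A$ with distinct projections, applies Lemma~\ref{Lem:ksubsets} to get $\Sigma(\pi(B))=\Z_p$, and uses convexity of $f(m)=\lfloor m/2\rfloor\lceil m/2\rceil$ together with the fact that $A\setminus B$ averages at least $5$ elements per fibre to verify the hypothesis of Lemma~\ref{Lem:Comb}; your route through condition~(1) of Lemma~\ref{Lem:Numerik} with the Cauchy--Schwarz lower bound on $L$ accomplishes the same thing. (Your stated sufficient condition ``$|S|\leq(p-2\ell)/4$'' points the wrong way --- the loss bound $\ell|A|/(2|S|)$ \emph{decreases} in $|S|$ --- but the argument does close, because for $|S|$ near $\ell$ the Cauchy--Schwarz bound on $L$ improves to order $p^{3/2}/|S|$ and the budget always dominates the loss; this needs to be written out.)

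The genuine gap is your second case, $|S|<\ell$, which you explicitly leave open and for which you predict that a Freiman-type structural analysis of $S$ is required. It is not. Your pessimism comes from choosing $B$ so large that $|A\setminus B|\leq\Ol(\Z_p)$: you force $\Sigma(\pi(B))=\Z_p$ via iterated Cauchy--Davenport on the whole multiset, which costs $|B|\approx p-1$ elements and indeed leaves too little mass behind. The paper's choice is the opposite. Since $|S|<\sqrt{4p-7}$, pigeonhole places at least $p/2$ elements of $A$ into fibres of size at least $\sqrt p/4$; take $A\setminus B$ to be these elements. Convexity of $f$ then gives $\sum_x f(N(x,U,A\setminus B))\geq\frac{f(\sqrt p/4)}{\sqrt p/4}\cdot\frac p2\approx\frac{p^{3/2}}{32}\geq p+1$ once $p>1024$ --- this is exactly where the threshold in the statement comes from --- so the sum condition of Lemma~\ref{Lem:Comb} holds with an enormous margin; your estimate ``$M^2/4<p$'' implicitly assumes $A\setminus B$ sits in a single fibre, which it need not. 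Meanwhile $B$ still has about $p/2+\Ol(\Z_p)$ elements with no projection value occurring too often, and $\Sigma(\pi(B))=\Z_p$ follows not from plain Chowla but from the pairing argument already used in Lemma~\ref{Lem:MMigros} (many disjoint pairs or triples of distinct, non-inverse elements combined via Lemma~\ref{Lem:CD}). So the missing case is an elementary pigeonhole-plus-convexity computation, not a structure theorem; as submitted, your proof is incomplete.
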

\begin{proof}
We choose a subset $B\subset A$, such that all elements of $\Z_p$ can
be represented as subset sums of $B$, and $|A\setminus B|\geq p$. Set
$f(\ell)=\lfloor\ell/2\rfloor\lceil\ell/2\rceil$. If $\sum_x f(N(x, U,
A\setminus B))\geq p$, we can apply Lemma~\ref{Lem:Comb}. The function
$f$ is convex, hence, we have
\[
\sum_x f(N(x, U, A\setminus B))\geq \frac{f(5)}{5}|A\setminus B| =
\frac{6}{5}|A\setminus B|, 
\]
and it suffices to show that we can choose $B$ sufficiently small. Suppose
first that the projection of $A$ onto $U$ contains at least
$\sqrt{4p-7}$ different elements. Then we take arbitrary different
elements and obtain our claim, provided that
$p/5+\Ol(\Z_p)\geq\sqrt{4p-7}$, which is certainly the case for
$p>100$. If the projection of $A$ onto $U$ contains less elements,
there are $p/2$ elements in $A$ contained in pre-images of $\pi_U$ of
single points, which contain $\sqrt{p}/4$ elements. Let $B$ be the
complement of this set. Again from
convexity we see that $\sum_x f(N(x, U, A\setminus B))\geq p$,
provided that $\sqrt{p}/4\geq 8$, which is the case for $p>1024$. On
the other hand, the remaining points may be partitioned into sets
containing $p/2$ elements altogether, and no $2\sqrt{p}$ have the same
image under $\pi_U$, hence, we see that $\Sigma(B)=\Z_p$ as well.
\end{proof}

\begin{Lem}
Suppose that $p>6000$. Then $A$ contains a zero-sum.
\end{Lem}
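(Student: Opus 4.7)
The plan is to derive a contradiction from the assumed zero-sum freeness of $A$ by showing that $A$ must in fact contain a zero-sum whenever $p>6000$, which yields the upper bound half of Theorem~\ref{thm:Olson}. I split the argument on the value of $M=M(A)$. If $M\geq 2p/5$, Lemma~\ref{Lem:2p/5} already yields the structural conclusion of Theorem~\ref{thm:Olson+}, giving a subgroup $U\cong\Z_p$ with $|A\cap U|=\Ol(\Z_p)$; but any subset of $\Z_p$ of size $\Ol(\Z_p)$ contains a zero-sum by the very definition of the Olson constant, contradicting the zero-sum freeness of $A$. If $\lfloor\sqrt{4p-7}\rfloor+1\leq M<2p/5$, Lemma~\ref{Lem:MMigros} applies directly.

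I may therefore assume $M\leq\lfloor\sqrt{4p-7}\rfloor$. Lemma~\ref{Lem:ManyDistinct} further lets me assume $|\pi_U(A)|\geq p/5$ for every nontrivial subgroup $U$, since otherwise a zero-sum is produced at once. The remaining task is to verify Condition~(2) of Lemma~\ref{Lem:Numerik}. Fix $U$ and set $S=\{j:\lambda_j^U\geq 1\}$, so $|S|\geq p/5$. The freedom to choose the isomorphism $V\cong\Z_p$ amounts to rescaling labels by some $c\in\Z_p^*$, and since $|\cos(\pi\cdot)|\leq 1$ while $\lambda_k\geq 1$ on $S$, the bound to verify reduces to
\[
\prod_{k}|\cos(\pi ck/p)|^{\lambda_k^U}\;\leq\;\prod_{j\in cS}|\cos(\pi j/p)|\;\leq\;\frac{1}{p^2}\qquad\text{for all }c\in\Z_p^*.
\]
As $c$ ranges, $cS$ takes various $|S|$-subsets of $\Z_p$, so it is enough to bound $\max_{T\subset\Z_p,\ |T|=|S|}\prod_{j\in T}|\cos(\pi j/p)|$ by $1/p^2$.

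This maximum is attained by the $T$ consisting of the $|S|$ indices closest to $0$ in $\Z_p$. Using $\log\cos(x)\leq -x^2/2$ on $[0,\pi/2)$ (whose power series has all-negative coefficients) together with $\sum_{j=1}^M j^2\sim M^3/3$, one obtains the upper bound $\exp(-\pi^2|S|^3/(24p^2))$. Substituting $|S|=p/5$ gives $\exp(-\pi^2 p/3000)$, and $\exp(-\pi^2 p/3000)\leq 1/p^2$ reduces to $p/\log p\geq 6000/\pi^2\approx 608$, which holds comfortably for $p>6000$ (where $p/\log p>689$). Condition~(2) of Lemma~\ref{Lem:Numerik} is therefore satisfied, and the lemma produces the sought zero-sum in $A$.

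The hard part is exactly this analytic estimate, which must be uniform over every subgroup $U$ and every isomorphism $V\cong\Z_p$. The threshold $p>6000$ is essentially forced by the inequality $\pi^2 p/3000\geq 2\log p$; the bookkeeping fractions $2/5$ in Lemmas~\ref{Lem:2p/5} and \ref{Lem:MMigros} and $1/5$ in Lemma~\ref{Lem:ManyDistinct} have been calibrated precisely so that this final cosine-product estimate just barely succeeds, which also explains why the authors remark they cannot push the method below $p<200$ without a wholly new idea.
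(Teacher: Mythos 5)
Your proposal is correct and follows essentially the same route as the paper: dispose of large $M$ via Lemmas~\ref{Lem:2p/5} and~\ref{Lem:MMigros}, use Lemma~\ref{Lem:ManyDistinct} to guarantee at least $p/5$ distinct projection values for every subgroup, and then verify condition~(2) of Lemma~\ref{Lem:Numerik} by bounding the cosine product over the worst case, namely the $p/5$ indices nearest $0$. The only (immaterial) difference is in the final estimate: you use $\log\cos x\le -x^2/2$ and sum, getting the exponent $-\pi^2 p/3000\approx -0.00329p$, while the paper compares the sum to an integral and gets $-0.00332p$; both comfortably beat $-2\log p$ for $p>6000$.
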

\begin{proof}
For every subgroup we can select $p/5$ elements with different value
under $\pi_U$. Since $4p/5+\Ol(\Z_p)>\sqrt{4p-7}$, it suffices to show
that the second condition of Lemma~\ref{Lem:Numerik} is satisfied for
each set consisting of $p/5$ different elements.
We have
\begin{eqnarray*}
\log \prod_{j=-p/10}^{p/10}\cos(j\pi/p) & < &
\int_{-p/10}^{p/10}\log\cos(t\pi/p)\;dt\\ 
 & = & p\int_{-1/10}^{1/10}\log\cos(t\pi)\;dt\\
 & < & -0.00332296p,
\end{eqnarray*}
hence, our claim follows provided that $p^2<1.003328^p$, which is the
case for $p>6000$.
\end{proof}

There are several obvious ways to improve the argument. First, $p/5$
in Lemma~\ref{Lem:ManyDistinct} can be improved, but not beyond
$p/4$. Then, $\frac{1}{p^2}$ in the second condition of
Lemma~\ref{Lem:Numerik} can be improved, since the exponential sum
will have a smaller value most of the time. However it will be difficult to
ensure that for some subgroup there will be no large term, that is, we
do not expect to obtain anything better then $\frac{1}{p}$. Finally,
one could consider the set of all partitions explicitly in the second 
part of Lemma~\ref{Lem:Numerik}, the improvement here is certainly
smaller then the bound obtained by taking $p/4$ elements four times
each. However, none of these improvements is completely
straightforward and even
if we suppose that the technical difficulties could be
overcome, our method cannot reach $p=200$, since the computational
amount would increase dramatically -- in particular for enumerating
all partitions of $p$. Hence we do not attempt to push our method to its
limits. Still we did formulate Lemma~\ref{Lem:Numerik} in a more general
way than we actually needed to help eventual improvements.

\end{document}